\numberwithin{equation}{section}
\definecolor{ududff}{rgb}{0.30196078431372547,0.30196078431372547,1}
\definecolor{yqqqqq}{rgb}{0.5019607843137255,0,0}
\definecolor{xdxdff}{rgb}{0.49019607843137253,0.49019607843137253,1}
\newcommand{\interior}[1]{%
  {\kern0pt#1}^{\mathrm{o}}%
}
\newtheorem{thm}{Theorem}[section]
\newtheorem{cor}[thm]{Corollary}
\newtheorem{prop}[thm]{Proposition}
\newtheorem{defn}[thm]{Definition}
\theoremstyle{definition}
\newtheorem{rem}[thm]{Remark}
\theoremstyle{remark}
\newcommand{\ds}{\displaystyle}
\newcommand{\R}{\mathbb{R}}
\newcommand{\de}{\partial}
\DeclareFontFamily{U}{mathb}{\hyphenchar\font45}
\DeclareFontShape{U}{mathb}{m}{n}{ <-6> matha5 <6-7> matha6 <7-8>
mathb7 <8-9> mathb8 <9-10> mathb9 <10-12> mathb10 <12-> mathb12 }{}
\DeclareSymbolFont{mathb}{U}{mathb}{m}{n}
\DeclareMathAccent{\abxring}{0}{mathb}{"38}
\DeclareFontFamily{U}{mathb}{\hyphenchar\font45}
\DeclareFontShape{U}{mathb}{m}{n}{ <-6> matha5 <6-7> matha6 <7-8>
mathb7 <8-9> mathb8 <9-10> mathb9 <10-12> mathb10 <12-> mathb12 }{}
\DeclareSymbolFont{mathb}{U}{mathb}{m}{n}
\DeclareMathOperator{\dive}{div}
\DeclareMathOperator{\spt}{spt}
\patchcmd{\abstract}{\scshape\abstractname}{\textbf{\abstractname}}{}{}
\def\@makefnmark{} 
\title[A monotonicity result for the first Steklov-Dirichlet eigenvalue]{A monotonicity result for the first\\ Steklov-Dirichlet Laplacian 
eigenvalue} 
\author[N. Gavitone, G. Piscitelli]{
	Nunzia Gavitone,  Gianpaolo Piscitelli}
\address{Dipartimento di Matematica e Applicazioni ``R. Caccioppoli'', Universit\`a degli studi di Napoli Federico II \\ Via Cintia, Complesso Universitario Monte S. Angelo, 80126 Napoli, Italy.}
\email{nunzia.gavitone@unina.it}
\address{Dipartimento di Matematica e Applicazioni ``R. Caccioppoli'', Universit\`a degli studi di Napoli Federico II \\ Via Cintia, Complesso Universitario Monte S. Angelo, 80126 Napoli, Italy.}
\email{gianpaolo.piscitelli@unina.it}
\begin{document}
\maketitle
\begin{abstract}
%
In this paper, we  consider  the first Steklov-Dirichlet eigenvalue  of the Laplace operator in annular domains with a spherical hole. We prove a monotoni-\\city result with respect to the hole, when the outer region is  centrally symmetric.

\vspace{.1cm}

\noindent\textsc{MSC 2020:} Primary: 35J25, 35P15; Secondary: 28A75, 49Q10.

\vspace{.1cm}

\noindent\textsc{Keywords}:  Laplacian eigenvalue, Steklov-Dirichlet boundary conditions, Shape derivative.
\end{abstract}

\section{Introduction and main result}
In recent years, the study of eigenvalue problems in holed domains has been the object of much interest. These kind of problems are usually defined in annular domains  with the outer region and the hole satisfying suitable assumptions. 
Specifically, different boundary conditions can be imposed on the outer and inner  boundary and hence several optimization problems can be studied (e.g. Robin-Neumann \cite{PPT}, Neumann-Robin \cite{DP}, Dirichlet-Neumann \cite{AA,AAK}, Steklov-Dirichlet \cite{PPS,GPPS,HLS}, Steklov-Robin \cite{GS}). At the mean time, the optimal placement of an obstacle has been studied, so as to maximize or minimize a prescribed functional (e.g. the Dirichlet heat content \cite{L}, the first Steklov eigenvalue \cite{F}).

In this paper, we consider an eigenvalue problem for the Laplace operator in a suitable annular domain with outer Steklov and inner Dirichlet boundary conditions.


More precisely, let $\Omega_0 \subset \R^n$, $n \ge 2$,  be an open  bounded  set with Lipschitz boundary and $B_r(y)$ be the ball of radius $r>0$, centered at $y$, such that $B_r(y)\subset \abxring{\Omega}_0$. We study the following Steklov-Dirichlet eigenvalue problem 
\begin{equation}\label{eig}
\sigma( \Omega)=\min_{\substack{v\in H^{1}_{\partial B_{r}(y)}(\Omega)\\ v\not \equiv0}}\left\{\int _{\Omega}|Dv|^2\;dx,\,\|v\|_{L^2(\partial \Omega_0)}=1
\right\},
\end{equation}
where  $H^1_{\partial B_{r}(y)}(\Omega)$  denotes the set of Sobolev functions which vanish on the boundary of $B_r(y)$ (see Section 2 for the precise definition). If $u\in H^{1}_{\partial B_{r}(y)}(\Omega)$ is a minimizer of \eqref{eig}, then it satisfies:
\begin{equation}\label{pb}
\begin{cases}
\Delta u=0 & \mbox{in}\ \Omega\vspace{0.2cm}\\
\dfrac{\de u}{\de \nu}=\sigma \,u&\mbox{on}\ \partial\Omega_0\vspace{0.2cm}\\ 
u=0&\mbox{on}\ \partial B_{r}(y), 
\end{cases}
\end{equation}
where $\Omega$ is the annular domain $\Omega=\Omega_0 \setminus\overline{B_r(y)}$, $\sigma \in \R$ and  $\nu $ is the  outer unit normal to  $\partial\Omega_0$.


In \cite{PPS} (see also \cite{D, HLS}), the authors prove that the minimum in \eqref{eig} is achieved by a function $u\in H^1_{\partial B_{r}(y)}(\Omega)$, which is a weak solution to problem \eqref{pb} with constant sign in $\Omega$, and that $\sigma(\Omega)$ is simple.
Furthermore, the authors prove that, keeping the measure of $\Omega$ and the radius of the inner ball $r$ fixed, $\sigma(\Omega)$ is maximized, among quasi-spherical sets, when $\Omega$ is a spherical shell, that is when $\Omega_0$ is a ball with the same center of the hole. On the other hand, in \cite{GPPS} the authors  extend this result  to a class of annular sets with a suitable convex  outer domain $\Omega_0$. 

More properties are known  when $\Omega$ is an eccentric spherical shell, that is,  when the outer domain $\Omega_0$  is a ball not necessarly  centered at  the same point of the spherical hole.  
In \cite{VS},  the authors  study the optimal placement of the hole in eccentric spherical shell $\Omega$ so that $\sigma(\Omega)$ is maximized when the outer ball and the inner radius are fixed. If $n \ge3$, they  prove that $\sigma(\Omega)$ achieves the maximum when the two balls are concentric. Subsequently, this result has been also proved for any dimension $n \ge 2$ in \cite{F}, by using different proofs (see also \cite{S} for an analogous result in two-points homogeneous spaces).

Moreover, by performing numerical experiments, the authors in \cite{HLS} exhibit that $\sigma(\Omega)$ is  monotone decreasing with respect to the distance between the centers of the two disks. 
Our aim is to prove that this monotonicity property holds in any dimension and  in a more general setting.

Through this paper, we assume that the  outer  domain $\Omega_0$ verifies the following hypotheses. 
\begin{defn}
\label{set}
Let  $\Omega_0 \subset \R^n$, $n \ge 2$, be an open, bounded set with Lipschitz boundary and centrally symmetric with respect to $x_0\in \abxring{\Omega}_0$, that is there exists $x' \in \overline{\Omega_0} $ such that $\frac 12 \left( x+x'\right)=x_0$   for any $x \in \overline{\Omega_0}$. 
\end{defn} 

Let $r>0$ be fixed,  our first  question is 
\begin{itemize}
\item[$(\mathcal Q_1)$]
Where we have to place the center of the spherical hole with fixed radius $r$ in order to maximize $\sigma(\Omega)$? 
\end{itemize}
To give an answer, let $w \in \R^n$ be a unit  vector and let us consider the holes $B_r(t)$ with the centers on the $w$-direction:
\begin{equation}
\label{ph}
B_r(t)=\left\{x(t)=x +tw, \,x \in B_r(x_0)\right\} \quad 0 \le t <\rho_w(x_0,r)-r,
\end{equation}
where 
\begin{equation}
\label{row}
\rho_w=\rho_w(x_0,r)=\sup\{t>0 \colon B_r(x_0+tw)\subset \abxring{\Omega}\}. 
\end{equation}
We stress that $\rho_w$ is the distance between $x_0$ and the center of the farthest ball from $x_0$, well contained in $\Omega_0$, in the direction $w$ passing at $x_0$.  
Then we consider the following type of  annular domains
\begin{equation}
\label{sett}
 \Omega(t)=\Omega_0 \setminus \overline{B_r(t)},
\end{equation}
where $\Omega_0$ verifies the assumption of Definition \ref{set}. We stress that  when $\Omega_0$ is a ball,  then the sets $\Omega(t)$ are eccentric spherical shells. Furthermore, for any $0\le t < \rho_w(x_0,r)-r$, we denote by $\sigma(t)$ the first Steklov-Dirichlet eigenvalue of the Laplacian in $\Omega(t)$, 
that is 
\begin{equation}\label{eigt}
\sigma( t)=\min_{\substack{v\in H^{1}_{\partial B_{r}(t)}(\Omega(t))\\ v\not \equiv0}}\left\{\int _{\Omega(t)}|Dv|^2\;dx,\ \,\|v\|_{L^2(\partial \Omega_0)}=1
\right\}.
\end{equation}
Our second questions is
\begin{itemize}
\item[$(\mathcal Q_2)$] Is $\sigma(t)$ decreasing with respect to $t$?
\end{itemize}
Our main result gives an answer to both questions $(\mathcal Q_1)$ and $(\mathcal Q_2)$.
\begin{thm}
\label{mono}
Let $\Omega_0$ be as in Definition \ref{set}, $w \in \R^n$ be a unit vector, $B_r(t)$, $\rho_w$, $\Omega(t)$ and $\sigma(t)$ be defined as in \eqref{ph}, \eqref{row}, \eqref{sett} and \eqref{eigt}, respectively. Then, $\sigma( t)$ is strictly monotone decreasing with respect to $t\in [0,\rho_w-r)$.
\end{thm}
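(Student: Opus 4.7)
By the results of \cite{PPS} recalled in the Introduction, for each $t\in[0,\rho_w-r)$ the eigenvalue $\sigma(t)$ is simple and is attained by a positive, $L^2(\partial\Omega_0)$-normalized, eigenfunction $u_t$; standard analytic perturbation theory then gives smooth dependence of both $\sigma(t)$ and $u_t$ on $t$. My plan is to compute the Hadamard shape derivative of $\sigma(\cdot)$ and to show that it is strictly negative for $t\in(0,\rho_w-r)$. To derive the formula, introduce a one-parameter family of diffeomorphisms $\Phi_s\colon\Omega(t)\to\Omega(t+s)$ acting as the translation $y\mapsto y+sw$ in a neighborhood of $\overline{B_r(t)}$ and as the identity near $\partial\Omega_0$. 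Using $u_t\circ\Phi_s^{-1}$ as a competitor in the Rayleigh quotient for $\sigma(t+s)$, expanding to first order in $s$, and invoking the divergence-free identity $\dive\bigl(\tfrac{1}{2}|Du_t|^2 w-(w\cdot Du_t)Du_t\bigr)=0$ valid for harmonic $u_t$, one arrives at
\[
\sigma'(t)=\int_{\partial B_r(t)}|Du_t|^2\,(\theta\cdot w)\,d\mathcal{H}^{n-1},\qquad \theta=\frac{x-x(t)}{r}.
\]

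At $t=0$ the annulus $\Omega(0)$ is invariant under the central reflection $S\colon x\mapsto 2x_0-x$; simplicity forces $u_0\circ S=u_0$, whence $|Du_0|^2$ is an even function of $\theta$ on $\partial B_r(x_0)$ and $\sigma'(0)=0$ by oddness of the integrand. For $t>0$, pairing each $x\in\Sigma^+:=\partial B_r(t)\cap\{\theta\cdot w>0\}$ with its image $P_t x\in\Sigma^-$ under the reflection $P_t$ through the hyperplane $\{(y-x(t))\cdot w=0\}$ recasts the derivative as $\int_{\Sigma^+}\bigl(|Du_t(x)|^2-|Du_t(P_t x)|^2\bigr)(\theta\cdot w)\,d\mathcal{H}^{n-1}$, reducing the monotonicity to the strict pointwise comparison $|\partial_\nu u_t(x)|<|\partial_\nu u_t(P_t x)|$ on the front hemisphere. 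To attack this, bring in the ``twin'' minimizer $v(x):=u_t(Sx)$ on the mirror annulus $\Omega_0\setminus\overline{B_r(x_0-tw)}$: by the central symmetry of $\Omega_0$, $v$ is a positive normalized eigenfunction corresponding to the \emph{same} eigenvalue $\sigma(t)$. On the overlap $\Omega':=\Omega_0\setminus(\overline{B_r(t)}\cup\overline{B_r(x_0-tw)})$ the difference $\psi:=u_t-v$ is harmonic, $S$-antisymmetric, satisfies $\partial_\nu\psi=\sigma(t)\,\psi$ on $\partial\Omega_0$, and has strict opposite signs on the two inner spheres ($\psi<0$ on $\partial B_r(t)$, $\psi>0$ on $\partial B_r(x_0-tw)$).

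The main obstacle is to leverage these properties of $\psi$ into the pointwise inequality for the normal derivatives of $u_t$. The plan is to apply the strong maximum principle to $\psi$ on the half-overlap $\Omega'\cap\{(x-x_0)\cdot w>0\}$, exploiting $S$-antisymmetry to control the trace on the separating cross-section and handling the Steklov condition on $\partial\Omega_0$ via a Green-type or variational argument; this should force $\psi<0$ strictly there, and Hopf's boundary lemma at $\partial B_r(t)$ then transfers into the desired strict comparison between $|\partial_\nu u_t|$ on $\Sigma^+$ and $\Sigma^-$, giving $\sigma'(t)<0$. The truly delicate point is justifying the maximum principle in the presence of the Steklov (hence sign-changing) outer boundary condition; the central symmetry of $\Omega_0$ is used essentially throughout, both to produce the twin minimizer with the same eigenvalue and to provide the $S$-antisymmetry needed to control $\psi$ on the cross-section. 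Once $\sigma'(t)<0$ is established on $(0,\rho_w-r)$, the strict monotonicity of $\sigma$ on $[0,\rho_w-r)$ follows immediately.
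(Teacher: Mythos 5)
Your first-derivative formula and the symmetry argument at $t=0$ agree with the paper (Theorem \ref{der1} and Corollary \ref{sim}), but the heart of your proof --- establishing $\sigma'(t)<0$ for $t>0$ by a reflection comparison --- is precisely the part you leave open, and the plan you sketch runs into two concrete obstructions. First, the difference $\psi=u_t-v$ satisfies $\partial_\nu\psi=\sigma(t)\,\psi$ on $\partial\Omega_0$ with $\sigma(t)>0$: this is a Robin condition with the sign for which the classical maximum principle \emph{fails} (a positive boundary maximum of $\psi$ on $\partial\Omega_0$ is perfectly compatible with $\partial_\nu\psi=\sigma\psi>0$), so ``handling the Steklov condition via a Green-type or variational argument'' is not a routine step but the whole difficulty, and nothing in your setup supplies the missing ingredient. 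Second, the antisymmetry you invoke is antisymmetry under the \emph{central} reflection $S\colon x\mapsto 2x_0-x$, which does not fix the separating hyperplane $\{(y-x(t))\cdot w=0\}$ (nor the one through $x_0$) pointwise; it only says that the trace of $\psi$ on that cross-section is odd under the restricted central map, which carries no sign information, so the boundary data for your half-domain maximum principle are not controlled. (A further technical point: for $0<t<r$ the balls $B_r(x_0+tw)$ and $B_r(x_0-tw)$ overlap, so $\Omega'$ and its ``half'' need a careful description, and part of $\partial B_r(t)$ is then interior to the reflected ball.) As written, the pointwise comparison $|\partial_\nu u_t(x)|<|\partial_\nu u_t(P_tx)|$ on the front hemisphere is asserted, not proved, and the proof is incomplete.

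For comparison, the paper never attempts to sign $\sigma'(t)$ directly at positive $t$: it computes the \emph{second} shape derivative (Theorem \ref{der2}) and observes that all three terms in $\sigma''(t)$ are nonpositive, the term $-\frac1r\int_{\partial B_r(t)}|\nabla u^t|^2\,d\mathcal H^{n-1}$ being strictly negative for every $t$ and every $n\ge2$ (note $3n-4\ge2$); together with $\sigma'(0)=0$ this gives $\sigma'(t)<0$ for all $t>0$ and hence strict monotonicity. If you want to salvage your route, you need a genuinely new argument for the comparison principle under the Steklov boundary condition (perhaps exploiting that $\sigma(t)$ is the \emph{first} eigenvalue), and you must replace the central reflection by a reflection adapted to the hyperplane $\{(y-x(t))\cdot w=0\}$ --- under which, however, $\Omega_0$ is in general not symmetric.
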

As an  immediate consequence of our main result, we obtain that, in order to maximize $\sigma(t)$, the  hole has to be centered at the  symmetry point of $\Omega_0$, when the inner radius is fixed.  

To prove Theorem \ref{mono}, we use a shape derivative approach. In particular we compute the first and the second domain derivatives of the first Steklov-Dirichlet eigenvalue. We emphasize that our result implies that the monotonicity property holds also for eccentric spherical shell in any dimension and in particular in two dimensions, as suggested by the numerical computation contained in  \cite{HLS}. 

Finally, we describe the outline of the paper. In Section \ref{prelim}, we summarize some results about the first Steklov-Dirichlet eigenvalue problems. 
In Section \ref{first}, we compute the first shape derivative of $\sigma(t)$, observing that a stationary set occurs when the center of the hole coincides with the center of symmetry of the outer region.  Finally, in Section \ref{second}, we compute the second shape derivative of $\sigma(t)$ and prove the main result.

\section{The Steklov-Dirichlet Laplacian eigenvalue problem
}
\label{prelim}
In this Section, we study the Steklov-Dirichlet Laplacian eigenvalue problem both on fixed domain $\Omega$ and on a parameter-dependent domain $\Omega (t)$, respectively. 
\subsection{Foundation of the problem}
Let $\Omega_0\subset\R^n$ be an open, bounded set with Lipschitz boundary and such that  $B_{r}(y)\subset\abxring\Omega_0$, where   $B_r(y)$ is the ball of radius $r>0$ centered at $y$. 
Let us  consider the annular domain $\Omega :=\Omega_0\setminus \overline{ B_{r}(y)}$. 
In what follows we  denote the set of Sobolev functions on $\Omega$ vanishing on $\partial B_{r}(y)$ by $H^1_{\partial B_{r}(y)}(\Omega)$, that is (see \cite{ET}) the closure in $H^1(\Omega)$ of the following set

\begin{equation*}
C^\infty_{\partial B_{r}(y)} (\Omega):=\{ u|_{\Omega}  \ | \ u \in C_0^\infty (\R^n),\ \spt (u)\cap \partial B_{r}(y)=\emptyset \}.  
\end{equation*}
It is known (see for instance \cite{A,D,P}) that the spectrum of the Steklov-Dirichlet eigenvalue problem \eqref{pb} for the Laplace operator in $\Omega$ is discrete and that the sequence of eigenvalues can be ordered as follows
\[
0<\sigma(\Omega)\le \sigma_2(\Omega)\le\sigma_3(\Omega)\le \ldots \nearrow +\infty.
\]
In  particular, the first eigenvalue $\sigma(\Omega)$ has the variational characterization \eqref{eig}, see for instance  \cite{D,PPS,HLS}. 
Moreover in \cite{PPS} (see also \cite{HLS}), the authors prove the following result.
\begin{prop}
There exists a function $u\in H^1_{\partial B_{r}(y)}(\Omega)$ which achieves the minimum  in \eqref{eig} and is a weak solution to the problem \eqref{pb}. Moreover $\sigma(\Omega)$ is simple and the first eigenfunctions have constant sign in $\Omega$. 
\end{prop}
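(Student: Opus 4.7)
First I would establish existence via the direct method of the calculus of variations. The crucial preliminary fact is a Poincaré inequality on the space $H^1_{\partial B_r(y)}(\Omega)$: since every function in this space has zero trace on $\partial B_r(y)$, a set of positive $(n-1)$-Hausdorff measure, one has $\|v\|_{L^2(\Omega)}\le C\|Dv\|_{L^2(\Omega)}$. I would pick a minimizing sequence $v_k\in H^1_{\partial B_r(y)}(\Omega)$ with $\|v_k\|_{L^2(\partial\Omega_0)}=1$ and $\int_\Omega|Dv_k|^2\,dx\to\sigma(\Omega)$; by Poincaré this sequence is bounded in $H^1(\Omega)$, so, passing to a subsequence, $v_k\rightharpoonup u$ in $H^1(\Omega)$, and by compactness of the trace operator $H^1(\Omega)\to L^2(\partial\Omega_0)$ (which follows from the Lipschitz regularity of $\partial\Omega_0$) also $v_k\to u$ strongly in $L^2(\partial\Omega_0)$. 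The limit $u$ lies in the weakly closed subspace $H^1_{\partial B_r(y)}(\Omega)$, inherits the unit $L^2(\partial\Omega_0)$-normalization from strong trace convergence, and by weak lower semicontinuity of the Dirichlet integral realizes the infimum.

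To see that $u$ solves \eqref{pb} in the weak sense, I would introduce $J(v)=\int_\Omega|Dv|^2\,dx-\sigma(\Omega)\int_{\partial\Omega_0}v^2\,d\mathcal{H}^{n-1}$, observe that $u$ is a critical point of $J$ on $H^1_{\partial B_r(y)}(\Omega)$, and compute $\frac{d}{d\eps}J(u+\eps\varphi)\big|_{\eps=0}=0$ for arbitrary admissible $\varphi$. This produces the identity $\int_\Omega Du\cdot D\varphi\,dx=\sigma(\Omega)\int_{\partial\Omega_0}u\varphi\,d\mathcal{H}^{n-1}$, which is the weak formulation of \eqref{pb} — the Dirichlet condition on $\partial B_r(y)$ being encoded in the choice of function space — so $u$ is harmonic in $\Omega$ and, in particular, smooth in the interior by elliptic regularity.

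For the sign property I would apply the minimization to $|u|$. Since $|D|u||=|Du|$ almost everywhere and $\||u|\|_{L^2(\partial\Omega_0)}=\|u\|_{L^2(\partial\Omega_0)}$, the function $|u|$ is itself a nonnegative minimizer, hence a nonnegative harmonic function on the open connected set $\Omega$. The strong maximum principle then forces either $|u|\equiv 0$ — ruled out by the normalization — or $|u|>0$ throughout $\Omega$, so $u$ has constant sign.

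Finally, to prove simplicity, suppose $u_1,u_2$ are linearly independent first eigenfunctions; by the previous step each may be taken strictly positive in $\Omega$. Fix any point $x_*\in\Omega$ and set $\lambda=u_1(x_*)/u_2(x_*)>0$, so that $v=u_1-\lambda u_2$ is a first eigenfunction with $v(x_*)=0$. Applying the constant-sign result to $v$ gives either $v\equiv 0$, whence $u_1=\lambda u_2$ contradicting linear independence, or $v$ of strict constant sign, contradicting $v(x_*)=0$. This rules out multiplicity and concludes the proof. The main technical obstacle is ensuring the Poincaré inequality and the trace compactness into $L^2(\partial\Omega_0)$ on the mixed-boundary space $H^1_{\partial B_r(y)}(\Omega)$; once these are in hand, the remaining assertions follow from standard variational arguments and the strong maximum principle.
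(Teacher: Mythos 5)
Your argument is correct, but note that the paper itself does not prove this proposition: it is quoted verbatim from the references \cite{PPS} and \cite{HLS}, so there is no in-paper proof to compare against. What you have written is essentially the standard argument that those references use: Poincar\'e inequality on $H^1_{\partial B_r(y)}(\Omega)$ (valid because the trace vanishes on $\partial B_r(y)$, which has positive $\mathcal H^{n-1}$ measure), compactness of the trace $H^1(\Omega)\to L^2(\partial\Omega_0)$ for Lipschitz boundaries, weak lower semicontinuity, the $|u|$ trick combined with the strong maximum principle, and simplicity via the positivity of first eigenfunctions. Three small points deserve a line of justification each, though none is a genuine gap. First, to pass from ``constrained minimizer'' to ``critical point of $J$ on the whole space'' you should observe that, by homogeneity of the Rayleigh quotient, $J(v)\ge 0$ for every $v\in H^1_{\partial B_r(y)}(\Omega)$ while $J(u)=0$, so $u$ is an unconstrained minimizer of $J$ and the Euler--Lagrange computation is legitimate. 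Second, the strong maximum principle and the simplicity argument both require $\Omega$ to be connected; this is implicit in the paper (and follows from connectedness of $\Omega_0$, since a compactly contained closed ball cannot disconnect an open connected set in $\R^n$, $n\ge 2$), but you should state it. Third, in the simplicity step you apply the constant-sign result to $v=u_1-\lambda u_2$, which presupposes that $v$, if nonzero, is an admissible competitor; this needs the observation that testing the weak equation with $v$ itself gives $\int_\Omega|Dv|^2\,dx=\sigma(\Omega)\int_{\partial\Omega_0}v^2\,d\mathcal H^{n-1}$, so $\int_{\partial\Omega_0}v^2\,d\mathcal H^{n-1}=0$ would force $v$ to be a constant vanishing on $\partial B_r(y)$, hence $v\equiv 0$. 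With these additions your proof is complete and self-contained, which is arguably more than the paper provides.
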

In order to prove our main result, we need to compute the first and the second shape derivative of $\sigma(\Omega)$ and, to study these derivatives, we need to consider a family of domains approaching to $\Omega$. In our case, we get the desired monotonicity result by studying the behavior of the involved quantities on the family of domains obtained by moving the hole in a fixed direction.  

In what follows we fix notation and recall some preliminary results.
Let $\Omega_0$ be as in Definition \ref{set}. For the reader convenience, from now on, we assume that $x_0=0$. In the sequel, we  denote by $B_r$ the ball centered at the origin with radius $r$ and we set $\Omega=\Omega_0 \setminus \overline{B_r}$ (see the figure below). 
\begin{center}
\begin{tikzpicture}[line cap=round,line join=round,>=triangle 45,x=1cm,y=1cm]
\clip(-4,-2.7) rectangle (4,2.7);
\draw [line width=1.pt,fill=black,fill opacity=0.19] (0,0) circle (1cm);
\draw [rotate around={165.4571281701613:(0,0)},line width=1.pt] (0,0) ellipse (4cm and 1.9cm);
\draw [line width=1pt] (0,0)-- (-1,0);
\begin{scriptsize}
\draw[fill=black] (0,0) circle (2.5pt);
\draw[color=black] (-2.3,2.5) node {$\partial \Omega_0 $};
\draw[color=black] (-2.6,1.2) node {$\Omega= \Omega_0\setminus{\overline{B_r}}$};
\draw[color=black] (-.5,-.17) node {$r $};
\draw[color=black] (-1.1,.6) node {$B_r $};
\end{scriptsize}
\end{tikzpicture} \end{center}

In order to study the Steklov-Dirichlet eigenvalue problem on the annular domain $\Omega (t)$ defined in \eqref{sett}, we define a suitable smooth vector field  which  move the  hole $B_r$ in a given direction $w$ keeping the boundary of $\Omega_0$ and the inner radius $r$ fixed. Hence, the perturbed holes have the form described in \eqref{ph}. We consider the following variational field in $\R^n$:
\begin{equation}
\label{campo}
V(x)=w\varphi(x), 
\end{equation}
where $\varphi \in C_0^\infty(\Omega_0)$ is a cut-off function such that $\varphi(x)=1$ on $\overline {B_r}$.
Consequently, the perturbed annular domains $\Omega(t)$, defined in \eqref{sett}, can be seen as 
\[
\Omega(t)=\{x(t)=x+tV(x),\,\, x\in \Omega\}\quad  t\in[0,\rho_w-r),
\]
where $\rho_w$ is defined in \eqref{row}. Let us observe that $\partial \Omega(t)=\partial \Omega_0 \cup \partial B_r(t)$ and that $\Omega(t)$ is centrally symmetric  if and only if $t=0$ and the center of symmetry is the origin. Also refer to the figure below.
\begin{center}
\begin{tikzpicture}[line cap=round,line join=round,>=triangle 45,x=1cm,y=1cm]
\clip(-4,-2.7) rectangle (4,2.7);
\draw [line width=1.pt,color=black,fill=black,fill opacity=0.19] (0.52,0.57) circle (1cm);
\draw [rotate around={165.4571281701613:(0,0)},line width=1.pt] (0,0) ellipse (4cm and 1.9cm);\draw [line width=.7pt,dash pattern=on 1pt off 1pt,domain=-8.56:8.32] plot(\x,{(-0--0.57*\x)/0.52});
\begin{scriptsize}
\draw[fill=black] (0,0) circle (2.5pt);
\draw [fill=black] (0.52,0.57) circle (2.5pt);
\draw[color=black] (0.28,.7) node {$tw$};
\draw[color=black] (-2.3,2.5) node {$\partial \Omega_0 $};
\draw[color=black] (2.1,2) node {$w $};
\draw[color=black] (2.1,.4) node {$B_r(t)$};
\draw[color=black] (1.1,-1.4) node {$\Omega(t)=\Omega_0 \setminus \overline{B_r(t)}$};
\end{scriptsize}
\end{tikzpicture}
\end{center}

For any $t\in[0, \rho_w-r)$, let $\sigma(t)$ be the first Steklov-Dirichlet eigenvalue \eqref{eigt} of the Laplacian in $\Omega(t)$ and $u^t$ be the corresponding normalized and positive eigenfunction. The first eigenvalue admit the variational characterization \eqref{eigt} and the eigenfunction $u^t\in H^{1}_{\partial B_{r}(t)}(\Omega(t))$ is a solution to the following eigenvalue problem
\begin{equation}\label{pt}
\begin{cases}
\Delta u^t=0 & \mbox{in}\ \Omega(t)\vspace{0.2cm}\\
\dfrac{\de u^t}{\de \nu}=\sigma (t)u^t&\mbox{on}\ \partial\Omega_0\vspace{0.2cm}\\ 
u^t=0&\mbox{on}\ \partial B_{r}(t). 
\end{cases}
\end{equation}
By a little abuse of notation, we also indicate by $\nu$ the outer unit normal to the boundary of the annular domain $\partial \Omega(t)$. Then, the variational characterization assures that $u^t\in  H^{1}_{\partial B_{r}(t)}$ is a positive function such that
\begin{equation*}
\sigma(t)=\displaystyle \int_{\Omega_t}|\nabla u^t|^2 \, dx,
\end{equation*}
and that verifies the following equality
\begin{equation}
\label{cn}
\displaystyle\int_{\partial \Omega_0} (u^t)^2 \, d\mathcal H^{n-1}=1.
\end{equation}


\subsection{Differentiability of $\sigma(t)$}

Standard arguments on shape derivatives assure that $u^t$ and $\sigma(t)$ are differentiable with respect to $t$.  For the sake of completeness, we sketchily give the proof, that follows exactly the same arguments contained in \cite{HP}. The main tool is a general version of the implicit function theorem (see \cite{HP,Sc} and also \cite[Lem.2.1]{Bo}) applied to the equation transferred onto the fixed domain $\Omega_0$  (for the details, we refer to \cite{HP} and also to \cite[Lem. 2.7]{Bo} and \cite[Th.1]{HLS}).
\begin{prop} 
Let $\Omega_0$ be as in Definition \ref{set}, $w \in \R^n$ be a unit vector, $B_r(t)$, $\rho_w$, $\Omega(t)$ and $\sigma(t)$ be defined as in \eqref{ph}, \eqref{row}, \eqref{sett} and \eqref{eigt}, respectively. 
Let $u^t$ be the first normalized eigenfunction of $\sigma(t)$, then the functions
\[
t\in[0, \rho_w-r)\to \sigma(t),\qquad t\in[0, \rho_w-r)\to u^t 
\]
are differentiable for any direction $w\in \mathbb S^{n-1}$ and for any $t\in [0, \rho_w-r)$.
\end{prop}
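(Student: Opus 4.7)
The natural strategy is to transfer the problem to the fixed reference domain $\Omega(t_0)$ for an arbitrary reference value $t_0\in[0,\rho_w-r)$, rephrase the differentiability assertion as a statement about a parametrized family of equations on that fixed domain, and then apply the implicit function theorem on Banach spaces. Concretely, I would set $\Phi_s(y):=y+sV(y)$ with $V$ as in \eqref{campo}. For $|s|$ sufficiently small, $\Phi_s$ is a smooth diffeomorphism of $\Omega(t_0)$ onto $\Omega(t_0+s)$ that fixes $\partial\Omega_0$ pointwise and sends $\partial B_r(t_0)$ onto $\partial B_r(t_0+s)$. Pulling back the eigenfunction by $\tilde u^s:=u^{t_0+s}\circ\Phi_s$, the weak formulation of \eqref{pt} and a change of variables yield
\[
\int_{\Omega(t_0)} A(s)\,\nabla\tilde u^s\cdot\nabla\phi\,dy\;=\;\sigma(t_0+s)\int_{\partial\Omega_0}\tilde u^s\,\phi\,d\mathcal H^{n-1}
\]
for every $\phi\in H^1_{\partial B_r(t_0)}(\Omega(t_0))$, together with the normalization $\int_{\partial\Omega_0}(\tilde u^s)^2\,d\mathcal H^{n-1}=1$, where $A(s)=(D\Phi_s)^{-1}(D\Phi_s)^{-T}|\det D\Phi_s|$ depends smoothly on $s$ and is valued in symmetric positive matrices.

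I would then encode these two conditions into a single nonlinear map. Setting $X:=H^1_{\partial B_r(t_0)}(\Omega(t_0))$, define
\[
F:(-\delta,\delta)\times X\times\R\to X^*\times\R,\qquad F(s,v,\lambda):=\bigl(L_s v-\lambda Bv,\ \langle v,v\rangle_{\partial\Omega_0}-1\bigr),
\]
where $(L_s v)(\phi):=\int_{\Omega(t_0)} A(s)\nabla v\cdot\nabla\phi$, $(Bv)(\phi):=\int_{\partial\Omega_0}v\phi\,d\mathcal H^{n-1}$, and $\langle\cdot,\cdot\rangle_{\partial\Omega_0}$ is the $L^2(\partial\Omega_0)$ inner product. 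This $F$ is jointly smooth (polynomial in $(v,\lambda)$ and $C^\infty$ in $s$ through $A(s)$) and satisfies $F(s,\tilde u^s,\sigma(t_0+s))=0$. The crucial step is to verify that $D_{(v,\lambda)}F(0,\tilde u^0,\sigma(t_0))$ is an isomorphism from $X\times\R$ onto $X^*\times\R$. Since $L_0-\sigma(t_0)B$ is self-adjoint and Fredholm of index zero on $X$, and, by the simplicity statement in the previous proposition, its kernel is spanned by $\tilde u^0$, the extra row $\psi\mapsto 2\langle \tilde u^0,\psi\rangle_{\partial\Omega_0}$ coming from the normalization and the extra column $\mu\mapsto-\mu B\tilde u^0$ coming from the eigenvalue unknown exactly compensate this one-dimensional degeneracy, yielding bijectivity.

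This invertibility check is the main technical obstacle; once it is obtained, the implicit function theorem in the form used in \cite{HP,Sc} (see also \cite[Lem.~2.1]{Bo}) produces a unique smooth curve $s\mapsto(\tilde u^s,\sigma(t_0+s))$ solving $F=0$ in a neighborhood of $s=0$. Composing $\tilde u^s$ with $\Phi_s^{-1}$, which depends smoothly on $s$, transfers this differentiability to the family of eigenfunctions $u^t$ on the moving domains $\Omega(t)$. Since $t_0\in[0,\rho_w-r)$ and $w\in\mathbb S^{n-1}$ were arbitrary, the claimed differentiability of both $t\mapsto\sigma(t)$ and $t\mapsto u^t$ follows.
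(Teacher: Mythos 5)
Your proposal follows essentially the same route as the paper: pull the eigenvalue equation back to a fixed domain via the diffeomorphism $x\mapsto x+sV(x)$, encode the weak formulation and the normalization in a single map $f(s,v,\sigma)$, and apply the implicit function theorem, with the key point being the invertibility of $\frac{\partial f}{\partial(v,\sigma)}$ at $(0,u^t,\sigma(t))$. The paper delegates that invertibility check to the arguments of \cite[Lem. 2.7]{Bo} and \cite[Th. 1]{HLS}, whereas you sketch it directly via the Fredholm-index-zero and simplicity argument; this is a matter of detail, not of method.
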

\begin{proof}
Let us fix $t\in [0, \rho_w-r)$ and let $s>0$ be such that $t+s<\rho_w-r$. Therefore, we are able to consider $\sigma(t+s)$ and $u^{t+s}$ and the following weak formulation for problem \eqref{eigt}
holds:
\begin{equation}
\label{weak_f_proof}
\int_{\Omega_0}\nabla u^{t+s}\,\nabla\varphi \,dx=\sigma(t+s)\int_{\partial\Omega_0} u^{t+s}\varphi\, d\mathcal H^
{n-1}\quad \forall\varphi\in H^1_{\partial B_r(t+s)}(\Omega).
\end{equation}
Let $V$ be as in \eqref{campo} and let us define the following map
\[
\Phi:(s,x)\in(-\rho_w+r-t,\rho_w-r-t)\times\Omega_0\to x+sV(x)\in \R^n.
\]
It is easily seen that,
for any $s$, $\Phi(s,\Omega_0)=\Omega_0$, $u^{t+s}(\Phi(s,\cdot)) \in H^1_{\partial B_r(t)}(\Omega)$ and $D\Phi(0,\cdot)=I$, where $D\Phi$ denotes the Jacobian matrix of $\Phi$ and $I$ the identity matrix of order $n$. Therefore, there exists a neighborhood $U$ of $0$ such that $\Phi (s,\cdot)$ is a diffeomorphism of $\Omega_0$ and hence, by making a change of variables, \eqref{weak_f_proof} becomes
\[
\begin{split}
\int_{\Omega_0}\nabla (u^{t+s}\left( \Phi(s,\cdot))\right)(D\Phi(s,\cdot))^{-1})(\nabla(\varphi \left( \Phi(s,\cdot))\right)(D\Phi(s,\cdot))^{-1})|D\Phi(s,\cdot)| \,dx\\
=\sigma(t+s)\int_{\partial\Omega_0}( u^{t+s}\left(\Phi(s,\cdot))\right)(\varphi\left( \Phi(s,\cdot))\right)\, d\mathcal H^{n-1}\quad \forall\varphi\in H^1_{\partial B_r(t+s)}(\Omega),
\end{split}
\]
and the normalization becomes 
\[
\int_{\partial\Omega_0}(u^{t+s}\left( \Phi(s,\cdot))\right)^2 d\mathcal{H}^{n-1}=1.
\]
Now, let us denote by $(H^1_{\partial B_r(t)}(\Omega))'$ the dual space of $H^1_{\partial B_r(t)}(\Omega)$ and let us  define
\[
f:(s,v, \sigma)\in \R\times H^1_{\partial B_r(t)}(\Omega)\times\R\to (f_1,f_2)\in(H^1_{\partial B_r(t)}(\Omega))'\times\R,
\]
where 
\[
\begin{cases}
\langle f_1(s,v,\sigma),\Psi\rangle=\int_{\Omega_0}((\nabla v)(D\Phi(s,\cdot))^{-1})((\nabla \Psi)(D\Phi(s,\cdot))^{-1})|D\Psi (s,\cdot)|dx-\sigma\int_{\Omega_0}v\Psi d\mathcal H^{n-1}\\
\langle f_2(s,v,\sigma),\Psi\rangle=\int_{\partial\Omega_0}v^2d\mathcal H^{n-1},
\end{cases}
\]
for any $\Psi\in H^1_{\partial B_r(t)}(\Omega_0)$.  If we consider the function 
\begin{equation*}
g\colon s \in U \to \left(u^{t+s}(\Phi(s,\cdot),\sigma(t+s)\right) \in H^1_{\partial B_r(t)}(\Omega) \times \mathbb R,
\end{equation*}
then $f(s,g(s))=0$ for any $s\in U$ and  $g(0)=\left(u^t,\sigma(t)\right)$. In order to obtain the claim, we have to prove that $g$ is differentiable in $s=0$, that follows by applying the implicit  function Theorem. To do this, we have to prove that 
\[
\frac{\partial f}{\partial (v,\sigma)}\Big|_{0,u^{t},\sigma(t)}: H^1_{\partial B_r(t)}(\Omega)\times \R\to (H^1_{\partial B_r(t)}(\Omega))'\times\R
\]
is an isomorphism. This can be proved following line by line the same arguments of \cite[Lem 2.7]{Bo} and \cite[Th. 1]{HLS} (see also chapter 5 in \cite{HP}).

\end{proof}
\begin{rem}
We observe that being $u^t$ harmonic for any $t\in[0,\rho_w-r)$, then $u^t \in C^{\infty}\left(\overline{ \Omega(t)}\right)$. Then by the general theory of the shape derivatives (see Chapter 5 of \cite{HP}), follows that $u^t$ is $ C^{\infty}$ in a neighborhood of $t$. 
\end{rem}

Since, to reach our aims, we need to consider the total and partial derivative of $u^t$ with respect to the parameter $t$, we observe that, by \eqref{ph}, we have
\begin{equation}
\label{vel1}
\frac{d}{dt}[u^t(t,x(t))]= (u^t)'+\langle \nabla u^t, w\rangle,
\end{equation}
 where  $(u^t)'=\displaystyle\frac{\partial u^t}{\partial t}$ and $\langle\cdot,\cdot\rangle$ denotes the usual scalar product in $\R^n$. Recalling that the perturbed hole $B_r(t)$ is the zero-level set of the function $u^t$, then \eqref{vel1} implies that
 \begin{equation}
 \label{vel2}
 (u^t)'= -\frac{\partial u^t}{\partial \nu}\, \langle \nu, w\rangle, \quad \text{ on } \partial B_r(t),
 \end{equation} for any $t\in[0,\rho_w-r)$. Moreover, we observe that it holds
\begin{equation}
\label{normal=nabla}
\nu=-\frac{\nabla u^t}{|\nabla u^t|} \quad \text{ on } \partial B_r(t)
\end{equation}
and
\begin{equation*}
(n-1)H=\text{div}\left(\frac{\nabla u^t}{|\nabla u^t|} \right)\quad \text{ on } \partial B_r(t).  
\end{equation*}
Therefore, we have
\begin{equation}
\label{h}
\begin{split}
\frac{(n-1)}{r}=(n-1)H&=-\frac{1}{|\nabla u^t|^2} \langle\nabla\left(|\nabla u^t|\right),\nabla u^t\rangle\\
&=-\frac{1}{|\nabla u^t|^3} \langle\nabla u^t \cdot D^2(u^t),\nabla u^t\rangle\quad \text{ on } \partial B_r(t),
\end{split}
\end{equation}
where $D^2(u^t)$ denotes the Hessian matrix of $u^t$.
Furthermore, by using \eqref{vel2} and the fact that $u^t$ is a solution to \eqref{pt}, we get that $(u^t)'$ is a weak solution to the following problem
 \begin{equation}
 \label{pp}
\begin{cases}
\Delta (u^t)'=0 & \mbox{in}\ \Omega(t)\vspace{0.2cm}\\
\dfrac{\de (u^t)'}{\de \nu}=\sigma' (t)u^t+\sigma(t)(u^t)'&\mbox{on}\ \partial\Omega_0\vspace{0.2cm}\\ 
(u^t)'= -\dfrac{\partial u^t}{\partial \nu}\, \langle \nu, w\rangle&\mbox{on}\ \partial B_{r}(t). 
\end{cases}
\end{equation}
We observe that, if we derive the normalized condition \eqref{cn}, we get
\begin{equation}
\label{cost}
\int_{\partial \Omega_0}(u^t)' \, u^t \, \, d \mathcal H^{n-1}=0.
\end{equation}
Finally, for every $t\in[0,\rho_w-r)$, it will be useful for the sequel to consider the harmonic extension $H(|\nabla u^t| \langle \nu,w\rangle)$ of the function with the same boundary value of \eqref{pp} on $\partial B_r(t)$; that is
\begin{equation}
\label{extension}
\Delta H(|\nabla u^t| \langle \nu,w\rangle)=0\quad \text{in }B_r(t),\qquad H(|\nabla u^t| \langle \nu,w\rangle)=|\nabla u^t| \langle \nu,w\rangle\quad\text{on }\partial B_r(t).
\end{equation}

\section{The first order Shape Derivative of $\sigma(t)$}\label{first}
In this Section, we compute the first oder derivative of the eigenvalue $\sigma(t)$ on $\Omega(t)$.
Before doing this, we recall an Hadamard's formula in the framework of the domain derivative (see for instance \cite{SZ,B,HP}).

Let $E\subset\R^n$ be an open bounded set with Lipschitz boundary and let $V(x)$ a vector field such that $V\in W^{1,\infty}(\R^n;\R^n)$. For any $t>0$, let $E(t)=\{x(t)=x+tV(x), \,x\in E\}$, and $f(t,x(t))$ be such that $f(t,\cdot)\in W^{1,1}(\R^n)$ and differentiable at $t$.
Then it 
  holds:
\begin{gather}\label{hf1}
\begin{split}
\frac{d}{dt} \int_{E(t)}f(t,x)\, d \mathcal H^{n-1}=&
 \int_{E(t)}\frac{\partial}{\partial t}f(t,x) \, d \mathcal H^{n-1} +\\
 & +\int_{+\partial E (t)} f(t,x) \langle \nu,V(x)\rangle\,d \mathcal H^{n-1},
\end{split} 
\end{gather} 
where $\nu$ is the outer unit normal to the boundary of $E(t)$ and $\mathcal H^{n-1}$ denotes the Hausdorff measure (see for instance the Chapter 5 of \cite{HP}). We use formula \eqref{hf1} to prove the following.
\begin{thm} 
\label{der1}
Let $\Omega_0$ be as in Definition \ref{set}, $w \in \R^n$ be a unit vector, $B_r(t)$, $\rho_w$, $\Omega(t)$ and $\sigma(t)$ be defined as in \eqref{ph}, \eqref{row}, \eqref{sett} and \eqref{eigt}, respectively. Then, for any $t\in [0, \rho_w-r)$, it holds
\begin{equation*}
\ds\frac{d}{dt}\sigma(t)=-\displaystyle \int_{\partial B_r(t)}|\nabla u^t|^2 \langle w,\nu \rangle \, d \mathcal H^{n-1},
\end{equation*}
where $u^t$ the  is the normalized,  positive eigenfunction corresponding to $\sigma(t)$.
\end{thm}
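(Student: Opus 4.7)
The plan is to differentiate the Rayleigh--quotient expression $\sigma(t)=\int_{\Omega(t)}|\nabla u^t|^2\,dx$ by combining Hadamard's formula \eqref{hf1} with the PDE characterization \eqref{pt} of the eigenfunction, and then cancel the volume contribution against one half of the boundary contribution. The raw ingredients are already assembled in Section~\ref{prelim}: the shape derivative $(u^t)'$ exists and solves \eqref{pp}; the Dirichlet condition on $\partial B_r(t)$ gives the identity \eqref{vel2}; the normalization \eqref{cn} yields the orthogonality relation \eqref{cost}; and the vector field $V=w\varphi$ from \eqref{campo} is supported in the interior of $\Omega_0$ with $V\equiv w$ on $\partial B_r(t)$.

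Applying \eqref{hf1} to $f(t,x)=|\nabla u^t(x)|^2$ on the moving domain $\Omega(t)$ splits $\frac{d}{dt}\sigma(t)$ into an interior piece $\int_{\Omega(t)}2\langle\nabla u^t,\nabla(u^t)'\rangle\,dx$ and a boundary piece $\int_{\partial\Omega(t)}|\nabla u^t|^2\langle\nu,V\rangle\,d\mathcal H^{n-1}$. For the interior piece I would integrate by parts, use $\Delta u^t=0$ to kill the bulk term, and then split the resulting boundary integral along $\partial\Omega_0\cup\partial B_r(t)$: on $\partial\Omega_0$ I substitute the Steklov condition $\partial u^t/\partial\nu=\sigma(t)u^t$ and invoke \eqref{cost} to conclude that the $\partial\Omega_0$-contribution vanishes, and on $\partial B_r(t)$ I substitute \eqref{vel2} together with the identity $|\partial u^t/\partial\nu|=|\nabla u^t|$ (valid since $u^t$ vanishes on $\partial B_r(t)$, so $\nabla u^t$ is parallel to $\nu$). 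This turns the interior piece into $-2\int_{\partial B_r(t)}|\nabla u^t|^2\langle\nu,w\rangle\,d\mathcal H^{n-1}$.

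For the Hadamard boundary piece, the cutoff property $V\equiv 0$ in a neighborhood of $\partial\Omega_0$ wipes out the outer contribution, while $V\equiv w$ on $\partial B_r(t)$ leaves exactly $+\int_{\partial B_r(t)}|\nabla u^t|^2\langle\nu,w\rangle\,d\mathcal H^{n-1}$. Adding the two contributions produces a factor $-2+1=-1$ in front of the same integral, giving the claimed formula.

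I do not anticipate a genuine obstacle in this first-derivative computation: the only places where one must be careful are the sign convention for the outer normal $\nu$ on $\partial B_r(t)$ (pointing into the hole) and the use of \eqref{cost} to discard the $\partial\Omega_0$ boundary term, so that the shape derivative $(u^t)'$ — which is merely known to solve \eqref{pp} and is not given explicitly — never needs to be evaluated. The more delicate computation is postponed to the second shape derivative in Section~\ref{second}, where the vanishing of the first derivative at $t=0$ (which follows by symmetry from the formula just derived) has to be exploited.
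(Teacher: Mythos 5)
Your proposal is correct and follows essentially the same route as the paper: apply Hadamard's formula \eqref{hf1} to $\int_{\Omega(t)}|\nabla u^t|^2\,dx$, integrate the bulk term $2\int\langle\nabla u^t,\nabla(u^t)'\rangle$ by parts using harmonicity, kill the $\partial\Omega_0$ contribution via the Steklov condition and \eqref{cost}, rewrite the $\partial B_r(t)$ contribution via \eqref{vel2} and $|\partial u^t/\partial\nu|=|\nabla u^t|$, and combine with the Hadamard boundary term (supported on $\partial B_r(t)$ only, since $V\equiv w$ there and $V\equiv 0$ near $\partial\Omega_0$) to get the factor $-2+1=-1$. This matches the paper's proof step for step, including the cancellation structure.
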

\begin{proof}
By using the Hadamard's formula \eqref{hf1}, by observing that the unit outer normals $\nu(x)$ to $\partial B_r$ and $\nu(x(t))$ to $\partial B_r(t)$ coincide and by the fact that $u^t$ is a solution of \eqref{pt}, we get
\begin{gather*}
\begin{split}
\ds\frac{d}{dt}\sigma(t)=& \frac{d}{dt} \displaystyle \left(\int_{\Omega(t)} |\nabla u^t|^2 \, dx \right)\\
=&2\displaystyle \int_{\Omega(t)} \langle\nabla u^t, \nabla (u^t)'\rangle\, dx +\displaystyle \int_{\partial B_r(t)}|\nabla u^t|^2 \langle w,\nu\rangle \, d \mathcal H^{n-1}\\
=& 2  \left(\int_{\partial \Omega_0} (u^t)'\, \frac{\partial u}{\partial \nu} \, d \mathcal H^{n-1} + \int_{\partial B_r(t)} (u^t)'\, \frac{\partial u^t}{\partial \nu} \, d \mathcal H^{n-1} \right)+\\
&+\int_{\partial B_r(t)}|\nabla u^t|^2 \langle w, \nu\rangle \, d \mathcal H^{n-1}\\
=&2  \left(\sigma(t)\int_{\partial \Omega_0} (u^t)'\ \,u \, d \mathcal H^{n-1} - \int_{\partial B_r(t)} \left(\frac{\partial u^t}{\partial \nu}\right)^2\,\langle w,\nu\rangle  d \mathcal H^{n-1} \right) +\\
&+\int_{\partial B_r(t)}|\nabla u^t|^2 \langle w, \nu\rangle \, d \mathcal H^{n-1} .
\end{split}
\end{gather*}
By taking into account the relation \eqref{cost}, we get the conclusion.
\end{proof}
In order to obtain our main result, we study the behavior of the first order derivative of $\sigma(t)$ for $t=0$. We stress that the symmetry of $\Omega_0$ has a key role in the proofs of these results.
\begin{cor}
\label{sim}
Let $\Omega_0$ be as in Definition \ref{set}, $w \in \R^n$ be a unit vector, $B_r(t)$, $\rho_w$, $\Omega(t)$ and $\sigma(t)$ be defined as in \eqref{ph}, \eqref{row}, \eqref{sett} and \eqref{eigt}, respectively. Then it holds
\[
\ds\frac{d}{dt}[\sigma(t)]_{t=0}=0.
\]
\end{cor}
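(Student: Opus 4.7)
The plan is to combine Theorem~\ref{der1} with the central symmetry of $\Omega_0$ to show the boundary integral vanishes at $t=0$ by an odd-function argument.

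First, I would invoke Theorem~\ref{der1} at $t=0$ to write
\[
\frac{d}{dt}[\sigma(t)]_{t=0} = -\int_{\partial B_r} |\nabla u^0|^2 \, \langle w, \nu\rangle \, d\mathcal{H}^{n-1},
\]
where $u^0$ is the first normalized positive eigenfunction on $\Omega(0)=\Omega_0\setminus \overline{B_r}$. Since we have chosen $x_0=0$ and $\Omega_0$ is centrally symmetric with respect to the origin, the annular domain $\Omega(0)$ is invariant under the map $x \mapsto -x$ (the hole $B_r$ is centered at the origin).

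Next I would use the simplicity of $\sigma(0)$ (stated in the Proposition of Section~\ref{prelim}) to deduce that $u^0$ is even. Indeed, the function $\tilde u(x):= u^0(-x)$ belongs to $H^1_{\partial B_r}(\Omega(0))$, is positive, and satisfies the Steklov-Dirichlet eigenvalue problem \eqref{pb} with the same eigenvalue $\sigma(0)$, because the Laplace operator, the Dirichlet condition on $\partial B_r$, and the Steklov condition on $\partial \Omega_0$ are all preserved by $x\mapsto -x$ (the outer normal $\nu$ satisfies $\nu(-x)=-\nu(x)$, hence $\partial_\nu \tilde u(-x)=-\partial_\nu u^0(x)\cdot(-1)=\partial_\nu u^0(x)$, so the boundary condition $\partial_\nu\tilde u = \sigma(0)\tilde u$ on $\partial\Omega_0$ holds). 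Since the first eigenspace is one-dimensional and both $u^0$ and $\tilde u$ are positive with the same $L^2(\partial\Omega_0)$-norm, we must have $u^0(-x)=u^0(x)$.

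Finally, from the evenness of $u^0$ I get $|\nabla u^0(-x)|^2 = |\nabla u^0(x)|^2$. On $\partial B_r$, the outer unit normal to $\Omega(0)$ (which points into the hole) is $\nu(x)=-x/r$, which is odd in $x$. Therefore the integrand $|\nabla u^0|^2\langle w,\nu\rangle$ is an odd function on the sphere $\partial B_r$, which itself is symmetric under $x\mapsto -x$; the integral thus vanishes, giving $\frac{d}{dt}[\sigma(t)]_{t=0}=0$. No step here is really delicate: the only point that requires care is justifying that $u^0(-x)$ solves the same problem, which hinges on the sign flip of $\nu$ compensating in the Steklov condition.
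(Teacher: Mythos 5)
Your proof is correct and follows essentially the same route as the paper: apply Theorem~\ref{der1} at $t=0$, use the central symmetry of $\Omega_0$ (and hence of $\Omega(0)$) to conclude that the first eigenfunction is even, and then observe that $|\nabla u^0|^2\langle w,\nu\rangle$ is odd on $\partial B_r$ so the integral vanishes. The only difference is cosmetic: you justify the evenness of $u^0$ explicitly via simplicity and positivity of the first eigenfunction, whereas the paper compresses this into the remark that $v(x)=u^0(x')$ is an admissible test function with the same Rayleigh quotient -- the same underlying argument.
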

\begin{proof}
Let $u^t$ be  the normalized, positive eigenfunctions corresponding to $\sigma(0)=\sigma(\Omega)$. We observe that, since $\Omega_0$ is centrally symmetric with respect the origin (see Definition \ref{set}), then
\begin{equation}\label{uuprimo}
u^t(x)=u^t(-x)\quad x\in\partial B_r.
\end{equation}
If we denote by $x'$ the symmetric point of $x$, then \eqref{uuprimo} immediately follows by taking $v(x)=u^t(x')$ as test function in \eqref{eig}.


Finally, by Theorem \ref{der1} and  being $\nu= -\displaystyle\frac{x}{r}$, we have
\[
\ds\frac{d}{dt}[\sigma(t)]_{t=0}=-\displaystyle \int_{\partial B_r}|\nabla u^t|^2 \langle \nu,w\rangle \, d \mathcal H^{n-1}=0.
\] 
\end{proof}


\section{The second order Shape Derivative  of $\sigma(t)$}\label{second}
To prove the main result (Theorem \ref{mono}), we need a stationary property of the first order derivative (analyzed in the previous Section) and a sign of the second order derivative. We compute the  second order domain derivative of $\sigma(t)$, by using the same notations of the previous Section and by recalling some useful definitions from \cite{BW}.

For any bounded domain $E\in C^{2,\alpha}$ and for any $f\in C^1(\partial E)$, the tangential derivative of $v$ is given by
\begin{equation}
\label{tangential_grad}
\nabla^\tau f=\nabla f-\langle \nabla f,\nu\rangle\nu\quad\text{on }\partial E,
\end{equation}
where $\nu$ is the unit outer normal. Furthermore, for any smooth vector field $\Psi:\overline E\mapsto\R$, the tangential divergence is defined as
\begin{equation}
\label{tang_div}
\dive_{\partial E}(\Psi)=\dive (\Psi)-\langle\nu,D(\Psi)\nu\rangle \quad\text{on }\partial E,
\end{equation}
where $D(\Psi)$ is the Jacobian matrix of $\Psi$. Then, the Gauss theorem on surfaces holds:
\begin{equation}\label{Gauss_thm}
\int_{\partial E} f\dive_{\partial E} \Psi d\mathcal H^{n-1}=-\int_{\partial E} \langle \Psi, \nabla^\tau f\rangle\  d\mathcal H^{n-1}+(n-1)\int_{\partial E} f\, H\, \langle \Psi,\nu\rangle\ d\mathcal H^{n-1}
\end{equation}
The following Theorem gives the expression of the second order domain derivative of $\sigma(t)$.
\begin{thm} 
\label{der2}
Let $\Omega_0$ be as in Definition \ref{set}, $w \in \R^n$ be a unit vector, $B_r(t)$, $\rho_w$, $\Omega(t)$ and $\sigma(t)$ be defined as in \eqref{ph}, \eqref{row}, \eqref{sett} and \eqref{eigt}, respectively.
Then, for any $t\in [0, \rho_w-r)$, it holds
\begin{equation*}
\begin{split}
\ds\frac{d^2}{dt^2}\sigma(t)&-\frac{2(n-1)}{r}\ds\frac{d}{dt}\sigma(t)=-2\int_{B_r(t)} |\nabla H\left( |\nabla u^t| \langle w,\nu\rangle\right)|^2dx\\
&-\dfrac 1r \int_{\partial B_r(t)}|\nabla u^t|^2d \mathcal H^{n-1}-\frac{n-2}{r}\int_{\partial B_r(t)}|\nabla u^t|^2 \left(\langle w, \nu\rangle \right)^2 \, d \mathcal H^{n-1},
\end{split}
\end{equation*}
where $u^t$ the is the normalized positive eigenfunction corresponding to $\sigma(t)$ and $H(|\nabla u^t| \langle w,\nu\rangle)$ is the harmonic extension of $|\nabla u^t| \langle w,\nu\rangle$ in $\overline{B_r(t)}$, defined in \eqref{extension}.
\end{thm}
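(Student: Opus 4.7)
The plan is to differentiate the first-derivative formula of Theorem \ref{der1}, namely $\sigma'(t) = -\int_{\partial B_r(t)} |\nabla u^t|^2 \langle w,\nu\rangle\, d\mathcal{H}^{n-1}$, once more in $t$. The key simplification is that $B_r(t)$ is a rigid translation of $B_r$, so the material flow $x(t)=y+tw$, $y\in\partial B_r$, satisfies $\nu(x(t))=-y/r$ independent of $t$. A change of variables (or the moving-boundary Hadamard formula) yields
\[
\sigma''(t) = -\int_{\partial B_r(t)}\!\!\Bigl[\,2\nabla u^t\cdot\nabla (u^t)' + w\cdot\nabla(|\nabla u^t|^2)\,\Bigr]\langle w,\nu\rangle\,d\mathcal H^{n-1},
\]
where the contributions of $\partial_t\langle w,\nu\rangle=1/r$ and $w\cdot\nabla\langle w,\nu\rangle=-1/r$ cancel.

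Next, I would decompose $w=\langle w,\nu\rangle\nu + w_\tau$ to separate the Hessian term into normal and tangential pieces: $w\cdot\nabla(|\nabla u^t|^2) = \partial_\nu(|\nabla u^t|^2)\langle w,\nu\rangle + w_\tau\cdot\nabla^\tau(|\nabla u^t|^2)$. Using \eqref{normal=nabla} and \eqref{h}, one computes on $\partial B_r(t)$ that $\partial_\nu(|\nabla u^t|^2)=2\nabla u^t\cdot D^2u^t\,\nu = -2|\nabla u^t|\partial_{\nu\nu}u^t = \frac{2(n-1)}{r}|\nabla u^t|^2$, producing the first contribution $\frac{2(n-1)}{r}\int|\nabla u^t|^2\langle w,\nu\rangle^2$. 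For the tangential piece, I would apply the surface Gauss formula \eqref{Gauss_thm} with tangent field $\Psi=w_\tau\langle w,\nu\rangle$, which satisfies $\langle\Psi,\nu\rangle=0$, converting the integral into $-\int|\nabla u^t|^2\,\mathrm{div}_{\partial B_r(t)}\Psi$. A direct calculation (using $D\nu=-I/r$ on $\partial B_r(t)$) gives $\mathrm{div}_{\partial B_r(t)}\Psi = (n\langle w,\nu\rangle^2-1)/r$, which yields the $-\frac{1}{r}\int|\nabla u^t|^2$ term and a curvature correction to the $\langle w,\nu\rangle^2$ coefficient.

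The term involving $\nabla u^t\cdot\nabla(u^t)'$ is rewritten via $\nabla u^t = -|\nabla u^t|\nu$ as $-|\nabla u^t|\partial_\nu(u^t)'$, and then collapsed using \eqref{vel2} (which identifies $|\nabla u^t|\langle w,\nu\rangle=(u^t)'$ on $\partial B_r(t)$) into $2\int_{\partial B_r(t)}(u^t)'\partial_\nu(u^t)'\,d\mathcal{H}^{n-1}$. To introduce the harmonic extension $h=H(|\nabla u^t|\langle w,\nu\rangle)$ of \eqref{extension}, I use that $h$ and $(u^t)'$ share the same Dirichlet trace on $\partial B_r(t)$ and that $\int_{B_r(t)}|\nabla h|^2\,dx = -\int_{\partial B_r(t)}(u^t)'\partial_\nu h\,d\mathcal H^{n-1}$ by integration by parts in $B_r(t)$ (with $\nu$ pointing \emph{into} the hole). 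Combining this with the integration by parts for $(u^t)'$ harmonic in $\Omega(t)$, together with the Robin boundary condition in \eqref{pp} and the orthogonality \eqref{cost}, allows one to express $\int(u^t)'\partial_\nu(u^t)'$ in terms of $\int_{B_r(t)}|\nabla h|^2$ plus a curvature-weighted $L^2$ contribution of $(u^t)'^2=|\nabla u^t|^2\langle w,\nu\rangle^2$ on $\partial B_r(t)$; after substitution, the three terms of the stated formula emerge.

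The main obstacle is the last step: producing the $-2\int_{B_r(t)}|\nabla h|^2$ term with the correct curvature-weighted $L^2$ correction on $\partial B_r(t)$. This requires carefully comparing the interior Dirichlet-to-Neumann operator (acting on $(u^t)'|_{\partial B_r(t)}$ from $B_r(t)$ via $h$) to the exterior one (coming from $\Omega(t)$), and exploiting that the boundary trace has the explicit form $(u^t)'=-\langle w,\nabla u^t\rangle$ on $\partial B_r(t)$, with $-\langle w,\nabla u^t\rangle$ itself harmonic in $\Omega(t)$; the remaining coefficient $\frac{3n-4}{r}$ arises as the sum of the $\frac{2(n-1)}{r}$ from the Hessian term, the $\frac{n}{r}$ from the tangential divergence, and the $-\frac{n-1}{r}$ produced by the spherical curvature when passing from $\int(u^t)'\partial_\nu(u^t)'$ to $-\int_{B_r(t)}|\nabla h|^2$.
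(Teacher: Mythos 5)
Your overall strategy is the same as the paper's: differentiate the formula of Theorem \ref{der1} once more, split the result into a term containing $\nabla (u^t)'$ and a term containing second derivatives of $u^t$, and evaluate the latter on the sphere using \eqref{normal=nabla}, \eqref{h} and the surface Gauss theorem. Your starting identity
\begin{equation*}
\sigma''(t)=-\int_{\partial B_r(t)}\Bigl[2\langle\nabla u^t,\nabla (u^t)'\rangle+\langle w,\nabla(|\nabla u^t|^2)\rangle\Bigr]\langle w,\nu\rangle\,d\mathcal H^{n-1}
\end{equation*}
is equivalent to the paper's \eqref{sigma_secondo}: the paper reaches it by converting $\sigma'(t)$ into a volume integral of $\dive(|\nabla u^t|^2V)$ and applying Hadamard's formula, but since $\dive(|\nabla u^t|^2w)=\langle w,\nabla(|\nabla u^t|^2)\rangle$ the two decompositions coincide. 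Your handling of the Hessian term is likewise parallel to the paper's $II+III$: the normal piece $-\int(\partial_\nu|\nabla u^t|^2)\langle w,\nu\rangle^2=-\tfrac{2(n-1)}{r}\int|\nabla u^t|^2\langle w,\nu\rangle^2$ corresponds to $III$, and the tangential piece, treated via $\dive_{\partial B_r(t)}(w_\tau\langle w,\nu\rangle)=(n\langle w,\nu\rangle^2-1)/r$ (a correct computation), plays the role of $II$.

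The genuine gap is the first term. The paper obtains $I=-2\int_{B_r(t)}|\nabla H(|\nabla u^t|\langle w,\nu\rangle)|^2dx$ by rewriting $-2\int_{\partial B_r(t)}\langle\nabla u^t,\nabla(u^t)'\rangle\langle w,\nu\rangle\,d\mathcal H^{n-1}$ as $-2\int_{\partial B_r(t)}(u^t)'\,\partial_\nu (u^t)'\,d\mathcal H^{n-1}$ and identifying this pairing with the Dirichlet energy of the harmonic extension inside $B_r(t)$, with \emph{no} additional curvature-weighted boundary correction; every $\langle w,\nu\rangle^2$ contribution in the final formula comes from $II$ and $III$. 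You, by contrast, explicitly defer this step (your ``main obstacle''), propose to close it by comparing the interior and exterior Dirichlet-to-Neumann operators on $\partial B_r(t)$, and postulate that this comparison yields an extra term with coefficient $-\tfrac{n-1}{r}$. That comparison is never carried out, and it is exactly the point that needs an argument: $(u^t)'$ is harmonic in $\Omega(t)$ while $H(|\nabla u^t|\langle w,\nu\rangle)$ is harmonic in $B_r(t)$, so their normal derivatives on the common boundary are a priori unrelated. Moreover your final bookkeeping does not reproduce the stated coefficient: $\tfrac{2(n-1)}{r}+\tfrac nr-\tfrac{n-1}{r}=\tfrac{2n-1}{r}\neq\tfrac{3n-4}{r}$, and the $\tfrac nr$ contribution from the tangential divergence enters your own Gauss-theorem computation with a $+$ sign (it reduces, rather than increases, the magnitude of the negative $\langle w,\nu\rangle^2$ coefficient), contradicting the way you add it at the end. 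As written, the proposal neither derives the $-2\int_{B_r(t)}|\nabla H(\cdot)|^2$ term nor arrives at $-\tfrac{3n-4}{r}$.
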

\begin{proof}
In order to compute the second order derivative,
we first observe that
\[
\displaystyle \int_{\partial B_r(t)}|\nabla u^t|^2 \langle w,\nu\rangle \, d \mathcal H^{n-1}=\int_{\partial\Omega(t)}|\nabla u^t|^2 \langle V,\nu\rangle \, d \mathcal H^{n-1},
\]
where $V$ is defined in \eqref{campo}. Therefore, by using the divergence Theorem, we have
\[
\sigma'(t)=-\int_{\Omega(t)} \dive ( |\nabla u^t|^2 V)\, d x.
\]

By using the Hadamard's formula \eqref{hf1} in the right-hand side, we compute the second order derivative of $\sigma(t)$:
\begin{gather}
\label{sec2}
\begin{split} 
\sigma''(t)=-\frac{d}{dt} \int_{\Omega(t)} \dive ( |\nabla u^t|^2 V)\, d x=& -\int_{\Omega(t)} \dive \left(\frac{\partial}{\partial t} |\nabla u^t|^2 V\right)\, d x\\
&-\int_{\partial\Omega(t)} \dive ( |\nabla u^t|^2 V)\ \langle V,\nu\rangle\, d \mathcal H^{n-1}.
\end{split}
\end{gather} 
Let us consider the second term in \eqref{sec2}, by using the tangential divergence \eqref{tang_div}, we have
\begin{equation}
\label{second_term}
\begin{split}
&-\int_{\partial\Omega(t)} \dive ( |\nabla u^t|^2 V)\langle V,\nu\rangle\, d \mathcal H^{n-1}=-\int_{\partial B_r(t)} \dive ( |\nabla u^t|^2 w)\langle w,\nu\rangle\, d \mathcal H^{n-1}\\
&\quad=-\int_{\partial B_r(t)} \dive_{\partial B_r(t)} ( |\nabla u^t|^2 w)\langle w,\nu\rangle\, d \mathcal H^{n-1}-\int_{\partial B_r(t)}  \langle \nu,D(|\nabla u^t|^2 w)\nu\rangle \, d \mathcal H^{n-1}.
\end{split}
\end{equation}
Hence, by inserting \eqref{second_term} in \eqref{sec2}, we have 
\begin{equation}
\label{sigma_secondo}
\begin{split}
\sigma''(t)=& -\int_{\Omega(t)} \dive \left(\frac{\partial}{\partial t} |\nabla u^t|^2 V\right)\, d x-\int_{\partial B_r(t)} \dive_{\partial B_r(t)} ( |\nabla u^t|^2 w)\langle w,\nu\rangle\, d \mathcal H^{n-1}\\
&-\int_{\partial B_r(t)}  \langle \nu,D(|\nabla u^t|^2 w)\nu\rangle \, d \mathcal H^{n-1}:=I+II+III.
\end{split}
\end{equation}
For reader's convenience, we separately study the terms $I$, $II$ and $III$ in \eqref{sigma_secondo}. Let us focus on $I$; by using the divergence Theorem, we have
\begin{equation}
\label{I_descr}
\begin{split}
I=-\int_{\Omega(t)} \dive \left(\frac{\partial}{\partial t} |\nabla u^t|^2 V\right)\, d x&=-2\int_{\Omega(t)} \dive \left(\langle \nabla u^t,\nabla (u^t)'\rangle V\right)\, d x\\
&=-2\int_{\partial\Omega(t)}\langle \nabla u^t,\nabla (u^t)'\rangle \ \langle V,\nu\rangle\, d \mathcal H^{n-1}\\
&=-2\int_{\partial B_r(t)}\langle \nabla u^t,\nabla (u^t)'\rangle\ \langle w,\nu\rangle\, d \mathcal H^{n-1}\\
&=-2\int_{\partial B_r(t)} \displaystyle \frac{\partial (u^t)'}{\partial \nu}(u^t)'d \mathcal H^{n-1}\\
&= -2\int_{B_r(t)} |\nabla H\left( |\nabla u^t| \langle w,\nu\rangle\right)|^2dx.
\end{split}
\end{equation}
Let us remark that $\nu$ is also the inner unit normal to $\partial B_r(t)$. 

Now let us consider $II$; by using the Gauss Theorem \eqref{Gauss_thm}, we have
\begin{equation}
\label{II_descr}
\begin{split}
II&=-\int_{\partial B_r(t)} \dive_{\partial B_r(t)} ( |\nabla u^t|^2 w)\langle w,\nu\rangle\, d \mathcal H^{n-1}\\
&=\int_{\partial B_r(t)}\langle |\nabla u^t|^2 w,\nabla^\tau(\langle w,\nu\rangle)\rangle\, d \mathcal H^{n-1}-\frac{n-1}r \int_{\partial B_r(t)}  |\nabla u^t|^2 (\langle w,\nu\rangle)^2\, d \mathcal H^{n-1}\\
&=\int_{\partial B_r(t)}\langle |\nabla u^t|^2 w,\nabla(\langle w,\nu\rangle)-\langle\nabla(\langle w,\nu\rangle),\nu\rangle\nu\rangle\, d \mathcal H^{n-1}\\
&\qquad\qquad\qquad\qquad\qquad\qquad\qquad\qquad\ -\frac{n-1}r \int_{\partial B_r(t)}  |\nabla u^t|^2 (\langle w,\nu\rangle)^2\, d \mathcal H^{n-1}\\
&=\int_{\partial B_r(t)}\left\langle |\nabla u^t|^2 w,-\frac  wr+\left\langle \frac wr,\nu\right\rangle\nu\right\rangle\, d \mathcal H^{n-1}-\frac{n-1}r \int_{\partial B_r(t)}  |\nabla u^t|^2 (\langle w,\nu\rangle)^2\, d \mathcal H^{n-1}\\
&=-\frac 1r \int_{\partial B_r(t)} |\nabla u^t|^2 d \mathcal H^{n-1}+\frac 1r \int_{\partial B_r(t)} |\nabla u^t|^2 (\langle w,\nu\rangle)^2\, d \mathcal H^{n-1}\\
&\qquad\qquad\qquad\qquad\qquad\qquad\qquad\qquad\ -\frac{n-1}r \int_{\partial B_r(t)}  |\nabla u^t|^2 (\langle w,\nu\rangle)^2\, d \mathcal H^{n-1}\\
&=-\frac 1r \int_{\partial B_r(t)} |\nabla u^t|^2 d \mathcal H^{n-1}-\frac{n-2}r \int_{\partial B_r(t)}  |\nabla u^t|^2 (\langle w,\nu\rangle)^2\, d \mathcal H^{n-1},
\end{split}
\end{equation}
where in the third line we have used the tangential gradient \eqref{tangential_grad}.

Let us focus on $III$; we have
\begin{equation}
\label{III_descr}
\begin{split}
III&=-\int_{\partial B_r(t)}  \langle \nu,D(|\nabla u^t|^2 w)\nu\rangle \, d \mathcal H^{n-1}=-2\int_{\partial B_r(t)}  \langle \nu,  D^2(u^t)\nabla u^t \rangle\ \langle w,\nu\rangle \, d \mathcal H^{n-1}\\
&=2\int_{\partial B_r(t)}  \dfrac{1}{|\nabla u^t|^3}\langle \nabla u^t,  D^2(u^t)\nabla u^t \rangle|\nabla u^t|^2 \langle w,\nu\rangle \, d \mathcal H^{n-1}\\
&=-\dfrac{2(n-1)}{r}\int_{\partial B_r(t)} |\nabla u^t|^2 \langle w,\nu\rangle \, d \mathcal H^{n-1},
\end{split}
\end{equation}
where we have used the relations \eqref{normal=nabla} and \eqref{h}.
The conclusion follows by using \eqref{I_descr}, \eqref{II_descr} and \eqref{III_descr} in \eqref{sigma_secondo}.
\end{proof}

Finally, the proof of the main Theorem is a direct consequence of the computed expressions of the first and second order shape derivative of the first Steklov-Dirichlet eigenvalue.

\begin{proof}[Proof of the Theorem \ref{mono}]
Since $n\geq 2$, the claim is a direct consequence of  the Theorems \ref{der1}, Corollary \ref{sim} and Theorem \ref{der2}  
 \end{proof} 
 \begin{rem}
An  immediate consequence of Theorem \ref{mono} is that  $\sigma(t)$ is maximum when the  hole is centered at the  symmetry point of $\Omega_0$, when $t=0$. 
\end{rem}
 \begin{rem}
 We stress that, when $\Omega_0=B_R$, the authors in \cite{HLS} prove the following estimate in two dimensions for $R>r$:
 \begin{equation}
 \label{lb}
 \liminf_{t \to (R-r)^-} \sigma(t) \ge \displaystyle \frac{r}{2R(R-r)},
 \end{equation}
 where $\sigma(t)=\sigma(\Omega(t))$ and $\Omega(t)=B_{R}\setminus B_{r}(t)$.
 By Theorem \ref{mono}, the estimate \eqref{lb} can be written as the following lower bound  
 \begin{equation}
 \label{st}
 \frac{1}{R\log(\frac{R}{r})}=\sigma(B_R\setminus B_r) \ge \sigma(t) \ge  \displaystyle \frac{r}{2R(R-r)}, \quad \forall t \in[0,\rho_w-r).
 \end{equation}
 Finally, we observe that the inequality \eqref{st} gives an upper and lower bound for $\sigma(t)$ in terms of the two radius of the eccentric annulus.
 \end{rem}
\section*{Acknowledgements}
This work has been partially supported by the MiUR-PRIN 2017 grant \lq\lq Qualitative and quantitative aspects of nonlinear PDEs\rq\rq, by GNAMPA of INdAM and by FRA 2020 \lq\lq Optimization problems in Geometric-functional inequalities and nonlinear PDEs\rq\rq(OPtImIzE).

We would like to thank the reviewer for his/her suggestions to improve the paper.

\section*{Compliance with Ethical Standards}
This paper does not disclose of potential conflicts of interest.

\small{

}

\end{document}